\newtheorem*{thm*}{Theorem}
\newtheorem*{conj*}{Conjecture}
\newtheorem*{remark}{Remark}
\newtheorem{thm}{Theorem}[section]
\newtheorem{cor}[thm]{Corollary}
\newtheorem{prop}[thm]{Proposition}
\newtheorem*{rmk}{Remark}
\newcommand{\Z}{\mathbb{Z}}
\newcommand{\SL}{\operatorname{SL}}
\newcommand{\C}{\mathbb{C}}
\numberwithin{equation}{section}
\begin{document}
\title{On Witten's extremal partition functions}
\dedicatory{In celebration of George Andrews' 80th birthday}
\author{Ken Ono and Larry Rolen}
\address{Department of Mathematics and Computer Science, Emory University, Atlanta, GA 30322}
\email{ken.ono@emory.edu}
\address{Department of Mathematics, Vanderbilt University, 1326 Stevenson Center, Nashville, TN 37240}
\email{larry.rolen@vanderbilt.edu}

\begin{abstract}
In his famous 2007 paper on three dimensional quantum gravity, Witten
 defined candidates for the partition functions $$Z_k(q)=\sum_{n=-k}^{\infty}w_k(n)q^n$$ of potential extremal conformal field theories (CFTs) with central charges of the form $c=24k$. Although such CFTs remain elusive, he proved that these modular functions are well-defined. In this note, we point out several explicit representations of these functions. These involve the partition function $p(n)$, Faber polynomials, traces of singular moduli, and Rademacher sums. 
Furthermore,
for each prime $p\leq 11$, the $p$ series $Z_k(q)$, where $k\in \{1, \dots, p-1\} \cup \{p+1\},$ 
possess a Ramanujan congruence. More precisely,
for every non-zero integer $n$ we have that
$$
      w_k(pn) \equiv 0\begin{cases} \pmod{2^{11}}\ \ \ \ &{\text {\rm if}}\ p=2,\\
      \pmod{3^5} \ \ \ \ &{\text {\rm if}}\ p=3,\\
      \pmod{5^2}\ \ \ \ &{\text {\rm if}}\ p=5,\\
      \pmod{p} \ \ \ \ &{\text {\rm if}}\ p=7, 11.
      \end{cases}
      $$
 \end{abstract}
\maketitle
\section{Introduction and statement of results}

In \cite{Witten3D}, Witten defined a sequence of functions which he proposed encode quantum states of three-dimensional gravity. Namely, he purported the existence of an extremal conformal field theory (in the language of \cite{Hoehn}) at any central charge $c=24k$ with $k\geq1$.  They should have partition functions equal to the unique weakly holomorphic\footnote{A modular form on $\SL_2(\Z)$ is said to be weakly holomorphic if its poles (if any) are
supported at the cusp $i\infty$.} modular functions on $\operatorname{SL}_2(\Z)$ with principal part (i.e., the negative powers of $q$ together with the constant term at $i\infty$) determined by:
\begin{equation}\label{Zkdefn}
Z_k(q)=\sum_{n=-k}^{\infty}w_k(n)q^n:=q^{-k}\prod_{n\geq2}\frac{1}{1-q^n}+O(q)
.
\end{equation}
For positive integers $k$, these functions are Witten's candidates for the generating functions that count the quantum states of three dimensional gravity in spacetime asymptotic to AdS$_3$ (see Section 3.1 of \cite{Witten3D}).

It is well-known that the Hauptmodul for $\operatorname{SL}_2(\Z)$, given by (here $\sigma_k(n):=\sum_{d|n}d^k$ and $q:=e^{2\pi i\tau}$)
\[
J(\tau)=j(\tau)-744=\frac{\left(1+240\sum_{n\geq1}\sigma_3(n)q^n\right)^3}{q\prod_{n\geq1}\left(1-q^n\right)^{24}}-744=q^{-1}+196884q+\ldots,
\]
generates the vector space of modular functions over $\C$. In particular, since there are no non-constant holomorphic modular functions, any polynomial in $J(\tau)$ which matches the principal part of $Z_k(q)$ is identically equal to $Z_k(q)$. For instance, we have 
\[
Z_1(q)=J(\tau),\quad\quad Z_2(q)=J^2(\tau)-393767, \quad\quad Z_3(q)=J^3(\tau)-590651J(\tau)-64481279.
\]

Witten gave an elementary argument that proves that the $Z_k(q)$ are well-defined. 
We offer several formulas for the modular functions $Z_k(q)$ in different guises. These formulas rely on expressions for the {\it partition function} $p(n)$, which counts the number of integer partitions of $n$, Faber polynomials, and Rademacher expansions, which are all standard in number theory. The hope is that these expressions might
shed light on the search for these extremal CFTs.

Our first interpretation of the functions $Z_k(q)$ uses the following generating function, which encodes the classical Faber polynomials $F_d(X)$, and where each coefficient of $q^d$ is a monic degree $n$ polynomial in $X$ (see \cite{AKN, ZagierTraces}):
\begin{equation}\label{OmegaDefn}
\begin{aligned}
\Omega(X; \tau)
&:=\frac{1-24\sum_{n\geq1}\sigma_{13}(n)q^n}{q\prod_{n\geq1}\left(1-q^n\right)^{24}}\cdot\frac{1}{J(\tau)-X}
=\sum_{d\geq0}F_d(X)q^d
\\
&=
1+(X-744)q+(X^2-1488X+159768)q^2+\ldots
.
\end{aligned}
\end{equation}
The Faber polynomials can be used to build the unique weakly holomorphic modular functions $J_d(\tau)$ satisfying $J_d(\tau)=q^{-d}+O(q)$ (see \cite{AKN,ZagierTraces}). More precisely, they satisfy 
\begin{equation}
J_d(\tau)=F_d(j(\tau)).
\end{equation}
Our first result is then the following, where
for any $q$-series 
\[f(q)=a_{-m}q^{-m}+\ldots+a_{-1}q^{-1}+a_0+a_1q+\ldots,\]
 we define the following ``principal part'' operator by:
\[
\mathrm{PP}(f)(q)=a_{-m}X_m+\ldots+a_{-1}X_1+a_0\cdot X_0.
\]
\begin{remark}
The principal part operator PP can be thought of as the complement of MacMahon's $\Omega_{\geq }$ operator. For instance, the reader is also referred to \cite{APR}.
\end{remark}
\begin{thm}
\label{Theorem1}
If $k$ is a positive integer, then the following are true.
\begin{enumerate}
\item[\emph{(i)}]
In terms of the partition function $p(n)$, we have
\begin{equation*}
%\label{Theorem1Eqn1}
Z_k(q)=p(k)+
\left(J_{k}(q)
-J_{k-1}(q)\right)
+
\sum_{n=1}^{k-1}
p(n)\left(J_{k-n}(\tau)-J_{k-n-1}(\tau)\right)
.
\end{equation*}
\item[\emph{(ii)}]
If we define $\Omega_k(X_0, X_1,\dots)$ by 
\[
\Omega_k(X_0, X_1,\dots):=\mathrm{PP}\left (q^{-k}\prod_{n\geq2}\frac{1}{(1-q^n)}\right)
\]
then
\begin{equation*}
%\label{Theorem1Eqn2}
Z_k(q)=\Omega_k(J_0(\tau), J_1(\tau), J_2(\tau),\dots).
\end{equation*}
\end{enumerate}
\end{thm}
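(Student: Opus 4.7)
The plan is to exploit the uniqueness principle recorded in the introduction: on $\SL_2(\Z)$ there are no non-constant holomorphic modular functions, so any weakly holomorphic modular function is determined by its principal part together with its constant term. Hence in both (i) and (ii) it suffices to exhibit a $\C$-linear combination of the weakly holomorphic modular functions $J_d(\tau)$ whose principal part (including constant term) matches that of $q^{-k}\prod_{n\geq 2}(1-q^n)^{-1}$.

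For (ii) this is essentially built into the definition of $\Omega_k$. Writing $\Omega_k(X_0, X_1, \ldots) = \sum_{d\geq 0} c_d X_d$, the constants $c_d$ are by definition the coefficients of $q^{-d}$ (for $d \geq 1$) and of $q^0$ (for $d = 0$) in $q^{-k}\prod_{n\geq 2}(1-q^n)^{-1}$. Since $J_d(\tau) = q^{-d} + O(q)$ for $d \geq 1$ and $J_0(\tau) \equiv 1$, the modular function $\sum_d c_d J_d(\tau)$ has exactly this principal part, and uniqueness forces $\Omega_k(J_0(\tau), J_1(\tau), \ldots) = Z_k(q)$.

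For (i) the strategy is the same, but I would first simplify the generating function via the elementary identity
\[
\prod_{n\geq 2}(1-q^n)^{-1} = (1-q)\sum_{n\geq 0} p(n)\, q^n.
\]
Multiplying by $q^{-k}$ gives $\sum_{n\geq 0} p(n)\bigl(q^{n-k} - q^{n-k+1}\bigr)$, and extracting non-positive powers of $q$ yields a telescoping expression in the partition numbers. It then remains to verify that the proposed combination $p(k) + (J_k - J_{k-1}) + \sum_{n=1}^{k-1} p(n)(J_{k-n} - J_{k-n-1})$ has matching principal part: the differences $J_{k-n} - J_{k-n-1}$ produce precisely the telescoping pattern $p(k-j) - p(k-j-1)$ for the interior coefficients of $q^{-j}$, while the boundary contributions (the coefficient $1$ at $q^{-k}$, the constant-term contribution $p(k) - p(k-1)$ arising from $J_0 \equiv 1$, and the cancellation at $q^{-(k-1)}$) come out correctly on direct inspection. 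Invoking uniqueness once more completes the proof.

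The only genuine obstacle is bookkeeping at these boundaries, where the interplay between the $(1-q)$ factor in the partition identity and the prescribed form $J_d = q^{-d} + O(q)$ manifests; conceptually, however, both parts are immediate consequences of the uniqueness of weakly holomorphic modular functions with given principal part.
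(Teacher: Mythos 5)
Your proposal is correct and follows essentially the same route as the paper: both rest on the identity $\prod_{n\geq 2}(1-q^n)^{-1}=(1-q)\sum_{n\geq 0}p(n)q^n$, the resulting telescoping principal part, and the uniqueness of a weakly holomorphic modular function on $\SL_2(\Z)$ with prescribed principal part. The only cosmetic difference is that you prove (ii) directly by the same uniqueness argument, whereas the paper deduces it from (i); the boundary bookkeeping you flag checks out exactly as you describe.
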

The first formula relies only on elementary properties of the partition generating function, while, as we shall see, the second connects the function $Z_k(q)$ to the world of the Monster and its moonshine. 

Our second main result writes $Z_k(q)$ as a blend of values of a canonical non-holomorphic modular function $\mathcal P(\tau)$ at CM points corresponding to elements of class groups (see Section~\ref{PofnAlgFormula} for the definition of $\mathcal P(\tau)$ and the sums of values of $\mathcal P(\tau)$ denoted by $\operatorname{Tr}(\mathcal P;n)$), and Rademacher sums $R_k(\tau)$ (see Section~\ref{PoincareSection} for the precise definitions). 
Specifically, we will express the partition numbers as traces of singular moduli, that is, sums of values at CM points, of the special non-holomorphic modular function $\mathcal P(\tau)$. The definition of 
this function, the exact notion of traces of singular moduli we require here, and the Rademacher series in the following theorem will be given in Section~\ref{PrelimSection}.
\begin{cor}
\label{Theorem2}
We have the following identity:
\[
Z_k(q)=\frac{1}{24k-1}\operatorname{Tr}(\mathcal P;k)+\left(R_k(\tau)-R_{k-1}(\tau)\right)+\sum_{n=1}^{k-1}\frac{1}{24n-1}\operatorname{Tr}(\mathcal P;n)\left(R_{k-n}(\tau)-R_{k-n-1}(\tau)\right).
\]
\end{cor}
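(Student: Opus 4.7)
The plan is to derive this corollary directly from Theorem~\ref{Theorem1}(i) by substituting in two well-known closed-form expressions: one that rewrites the partition numbers $p(n)$ as weighted traces of singular moduli of $\mathcal{P}(\tau)$, and one that rewrites each weakly holomorphic modular function $J_d(\tau)$ as a Rademacher sum $R_d(\tau)$. Concretely, I would take the identity
\[
Z_k(q)=p(k)+\bigl(J_k(\tau)-J_{k-1}(\tau)\bigr)+\sum_{n=1}^{k-1}p(n)\bigl(J_{k-n}(\tau)-J_{k-n-1}(\tau)\bigr)
\]
as the starting point, and then simply plug into it the formulas to be recalled in Section~\ref{PrelimSection}.

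The first substitution is the Bruinier--Ono style algebraic formula for the partition function, namely
\[
p(n)=\frac{1}{24n-1}\operatorname{Tr}(\mathcal{P};n),
\]
where $\operatorname{Tr}(\mathcal{P};n)$ denotes the sum of the values of the non-holomorphic modular function $\mathcal{P}(\tau)$ at a distinguished set of CM points of discriminant $1-24n$. This will be stated precisely (with the definitions of $\mathcal{P}(\tau)$ and of the trace) in Section~\ref{PofnAlgFormula}, and I would invoke it as a black box here. The second substitution is the classical convergent Rademacher expansion for the unique weakly holomorphic modular function on $\operatorname{SL}_2(\Z)$ with principal part $q^{-d}$: this expansion produces an explicit series $R_d(\tau)$, and the fact that
\[
J_d(\tau)=R_d(\tau)
\]
follows from the uniqueness noted just after \eqref{Zkdefn}, because $R_d(\tau)$ converges to a weakly holomorphic modular function of the correct principal part. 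I would record this as a lemma (or cite it) in Section~\ref{PoincareSection} and then apply it termwise.

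After these two substitutions are plugged into the formula of Theorem~\ref{Theorem1}(i), the statement of the corollary follows immediately, with no further manipulation required. There is no real obstacle in the argument itself; the only work lies in setting up the definitions carefully so that the plug-in is unambiguous. In particular, one must be careful that the Rademacher sums $R_d(\tau)$ are indexed consistently (with $R_0(\tau)$ interpreted as the constant function $1$ to match $J_0(\tau)$), and that the trace $\operatorname{Tr}(\mathcal{P};n)$ is normalized so that the equality $p(n)=\tfrac{1}{24n-1}\operatorname{Tr}(\mathcal{P};n)$ holds as stated. Once these definitions are in place in Section~\ref{PrelimSection}, the proof of the corollary is a one-line substitution.
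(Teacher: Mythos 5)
Your proposal is correct and matches the paper's proof exactly: the paper also obtains the corollary by substituting the Bruinier--Ono algebraic formula $p(n)=\tfrac{1}{24n-1}\operatorname{Tr}(\mathcal P;n)$ (Theorem~\ref{PartitionAlgFormula}) and the Rademacher identity $J_d(\tau)=R_d(\tau)$ (Proposition~\ref{RadSeries}) into Theorem~\ref{Theorem1}(i). Your remark about interpreting $R_0(\tau)$ as the constant $1$ is a sensible care point that the paper leaves implicit.
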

\begin{remark}
The results in \cite{BruinierOnoSutherland} indicate how to efficiently compute $p(n)$ as traces of singular moduli numerically, and may be useful to those wishing to implement the 
identities presented here.
\end{remark}
\begin{remark}
Connections between class numbers (which count the number of terms in the traces of singular moduli discussed here), their algebraic structures, and black holes were also described recently in
 \cite{BenjaminKachruOnoRolen}. It would be interesting to see if the connection between the results discussed here and in that paper have a deeper connection.
\end{remark}
\begin{remark}
Although the partition numbers $p(n)$ may also be written as Rademacher sums, here we have chosen to highlight their alternative algebraic representations. 
Specifically, a version of Rademacher's famous exact formula for $p(n)$ is:
\[
p(n)= \frac{2 \pi}{(24n-1)^{\frac{3}{4}}} \sum_{k =1}^{\infty}
\frac{A_k(n)}{k}  I_{\frac{3}{2}}\left( \frac{\pi
\sqrt{24n-1}}{6k}\right),
\]
where 
\[
A_k(n):=\frac{1}{2}\sqrt{\frac{k}{12}}
 \sum_{\substack{d \pmod{24k}\\
d^2\equiv -24n+1\pmod{24k}}} 
\left(\frac{12}{d}\right) e^{\frac{\pi d i}{6k}}
\]
is a Kloosterman sum ($\left(\frac{12}{d}\right)$ denotes a Kronecker symbol)
and $I_{\frac{3}{2}}$ is a modified $I$-Bessel function.
\end{remark}

It turns out that the coefficients of some of the $Z_k(q)=\sum_{n=-k}^{\infty}w_k(n)q^n$ possess striking systematic congruences which are analogous
to the celebrated partition congruences of Ramanujan
\begin{displaymath}
\begin{split}
p(5n+4)&\equiv 0\pmod 5,\\
p(7n+5)&\equiv 0\pmod{7},\\
p(11n+6)&\equiv 0\pmod{11}.
\end{split}
\end{displaymath}
 If $p\leq 11$ is prime, then the $p$ series
$\{Z_1(q),\dots, Z_{p-1}(q)\}\cup \{Z_{p+1}(q)\}$ all simultaneously satisfy Ramanujan congruences modulo
fixed small powers of $p$. Namely, we prove the following theorem.

\begin{thm}\label{congruences}
If $p\leq 11$ is prime and $k\in \{1,\dots, p-1\} \cup \{p+1\}$, then for every non-zero integer $n$ we have that
$$
      w_k(pn) \equiv 0\begin{cases} \pmod{2^{11}}\ \ \ \ &{\text {\rm if}}\ p=2,\\
      \pmod{3^5} \ \ \ \ &{\text {\rm if}}\ p=3,\\
      \pmod{5^2}\ \ \ \ &{\text {\rm if}}\ p=5,\\
      \pmod{p} \ \ \ \ &{\text {\rm if}}\ p=7, 11.
      \end{cases}
      $$
\end{thm}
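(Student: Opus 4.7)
My plan is to use Theorem~\ref{Theorem1}(i) to express $Z_k$ as an explicit $\Z$-linear combination of the canonical basis functions $J_m$, and then to invoke Lehner-type congruences on each such $J_m$ individually. Telescoping the sum in Theorem~\ref{Theorem1}(i) gives
\[
Z_k(\tau) \;=\; J_k(\tau) \;+\; \sum_{m=1}^{k-2}\bigl(p(k-m)-p(k-m-1)\bigr)J_m(\tau) \;+\; \bigl(p(k)-p(k-1)\bigr),
\]
where the coefficient of $J_{k-1}$ vanishes automatically (its telescoping contribution $-1+p(1)$ is zero). The key combinatorial observation is that for every $k\in\{1,\dots,p-1\}\cup\{p+1\}$, the indices $m\in\{0,1,\dots,k\}\setminus\{k-1\}$ appearing with nonzero coefficient all satisfy $p\nmid m$. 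A direct inspection of the principal part (using $\prod_{n\geq 2}(1-q^n)^{-1}=1+q^2+q^3+\ldots$ and $p\nmid k$) shows that $w_k(pn)=0$ for every non-zero integer $n$ with $-k\leq pn<0$, so the theorem is equivalent to the $q$-series congruence
\[
U_p(Z_k)-w_k(0)\;\equiv\; 0 \pmod{p^{a_p}},
\]
where $U_p$ denotes $\sum a(n)q^n\mapsto\sum a(pn)q^n$ and $(a_2,a_3,a_5,a_7,a_{11})=(11,5,2,1,1)$.

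By linearity of $U_p$, this reduces to the key lemma I would next establish: for every $p\in\{2,3,5,7,11\}$ and every $m\in\{1,\dots,p-1\}\cup\{p+1\}$, one has
\[
U_p(J_m)(\tau)\equiv 0 \pmod{p^{a_p}}.
\]
Note that $U_p(J_m)$ has no negative or constant term (since $p\nmid m$ and $J_m=q^{-m}+O(q)$), so the congruence is a statement purely about positive-power Fourier coefficients. The case $m=1$ is exactly the classical Lehmer--Lehner theorem: writing $J=\sum c(n)q^n$, one has $c(pn)\equiv 0\pmod{p^{a_p}}$ for every positive integer $n$ and each $p\in\{2,3,5,7,11\}$. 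For $m\in\{2,\dots,p-1\}\cup\{p+1\}$ I would argue in one of two ways. First, via the Hecke algebra: express $J_m$ as an integer polynomial in $J_1=J$ and exploit the modular-equation identity $\Phi_p(X,Y)\equiv (X-Y^p)(X^p-Y)\pmod{p}$ combined with the Atkin--Lehner structural theorem for $U_p$ acting on weakly holomorphic modular functions for $\Gamma_0(p)$. Second, by direct verification: for each of the finitely many pairs $(p,m)$ arising here, $U_p(J_m)$ is a holomorphic modular function on $\Gamma_0(p)$ at the cusp $\infty$ with explicitly controlled order at the cusp $0$, so a Sturm-type bound reduces the congruence to a finite computation.

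The hard part will be the key lemma for the deeper exponents $a_2=11$ and $a_3=5$: the mod-$p$ vanishing of $U_p(J_m)$ follows quickly from the Frobenius/modular-polynomial congruence, but lifting to the higher prime-power exponents uses the subtle arithmetic of $U_p$ on $\Gamma_0(p)$-modular functions, as originally developed by Lehmer, Lehner, Newman, and Atkin in their analysis of the classical Lehmer congruences for the $j$-function. Once the key lemma is in hand, the theorem follows immediately: $U_p(Z_k)-w_k(0)$ is a $\Z$-linear combination of $U_p(J_m)$'s, each of whose Fourier coefficients is divisible by $p^{a_p}$; hence $w_k(pn)\equiv 0\pmod{p^{a_p}}$ for every non-zero integer $n$, as claimed.
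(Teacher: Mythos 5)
Your reduction is correct and is organized genuinely differently from the paper's proof. The telescoping of Theorem~\ref{Theorem1}(i) is right, the observation that the coefficient of $J_{k-1}$ vanishes (which is what removes the dangerous index $m=p$ when $k=p+1$) is right, and the check that the principal-part coefficients $w_k(pn)$ with $n<0$ all vanish is right. So the theorem really does reduce to your key lemma that $J_m\,|\,U(p)\equiv 0\pmod{p^{a_p}}$ for the relevant $m$ with $p\nmid m$. The gap is that you do not prove that lemma: you explicitly defer ``the hard part,'' and the first route you sketch (writing $J_m$ as an integer polynomial in $J$ and invoking the modular equation plus Atkin--Lehner theory) is precisely the route that becomes laborious for the prime-power moduli $2^{11}$, $3^5$, $5^2$. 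What you are missing is that the lemma follows in one line from the case $m=1$. Since $J_m=m\left(J_1\,|\,T_m\right)$, its coefficients are
\[
c_m(n)=\sum_{a\mid \gcd(m,n)}\frac{m}{a}\,c\!\left(\frac{mn}{a^2}\right),
\]
where $J=\sum c(n)q^n$. When $p\nmid m$, every $a$ dividing $\gcd(m,pn)$ is prime to $p$ and hence divides $n$, so $c_m(pn)$ is an integer combination of values $c(pN)$ with $N\geq 1$, each of which is divisible by $2^{11}$, $3^5$, $5^2$, $7$, $11$ for $p=2,3,5,7,11$ respectively by Lehner's classical congruences for the $j$-function. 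That closes your argument for all five primes simultaneously and with the full prime-power exponents.

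For comparison, the paper does not isolate such a lemma. For $p=7,11$ it cites Theorem 2.3 of Elkies--Ono--Yang, which gives $F(j)\,|\,U(p)\equiv a(0)\pmod p$ for monic integral $F$ of degree less than $p$ (with a separate clause covering $(p,k)=(7,8),(11,12)$); this is essentially your key lemma mod $p$ and matches your ``Frobenius/modular-polynomial'' route. For $p=2,3,5$ the paper sidesteps the lemma entirely: it multiplies $Z_k(q)$ by $\Delta(p\tau)^{kp}$ to obtain a holomorphic form of weight $12kp$ on $\Gamma_0(p)$ and verifies the congruence for $(Z_k(q)\cdot\Delta(p\tau)^{kp})\,|\,U(p)$ by Sturm's bound, i.e.\ a finite check of the first $kp(p+1)$ coefficients --- essentially your ``second way,'' applied to $Z_k$ directly rather than to each $J_m$. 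Your decomposition, once supplemented with the Hecke-operator observation above, yields a uniform proof requiring no computer verification; as written, however, the key lemma for the exponents $11$, $5$, $2$ remains open in your proposal.
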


\begin{rmk}
We have made no attempt to completely classify all of the Ramanujan type congruences satisfied by the $Z_k(q)$.
For each positive integer $m$ and each $k\geq 1$, it is well known that there are arithmetic progressions $an+b$ for which
$$
w_k(an+b)\equiv 0\pmod m.
$$
This follows easily from the theory of $p$-adic modular forms (for example, see Chapter 2 of \cite{CBMS}). 
The unexpected phenomenon here is the uniformity of these congruences among the low index $q$-series for the primes $p\leq 11$. 
\end{rmk}

%\section*{Acknowledgements} 

\section{Nuts and bolts and the proofs}
\label{PrelimSection}
In this section, we review the basic definitions and results needed for the statements and proofs of the main theorems.

\subsection{Faber polynomials and the connection to Monstrous Moonshine}
Recall from above that 
for each $d\geq0$, we have a function $J_d$, which is the unique weakly holomorphic modular function on $\operatorname{SL}_2(\Z)$ with principal part
\begin{equation}
\label{JdPP}
J_d(\tau):=q^{-d}+O(q).
\end{equation}
In particular, $J_0=1$ and for $d\geq1$, 
in terms of the normalized Hecke operators $T_d$ (see \cite{AKN, ZagierTraces}), we have 
\[J_d(\tau)=d\left(J_1(\tau)|T_d\right).
\]
These functions are, of course, monic degree $n$ polynomials in $J(\tau)$. These polynomials are known as {\it Faber polynomials}, and are closely related to the denominator formula for the Monster Lie algebra (for details on moonshine and related subjects, the reader is referred to the excellent exposition in \cite{Gannon}).
Specifically, the denominator formula states that
$$
J(z)-J(\tau)
=e^{-2\pi iz} \prod_{\buildrel{m>0}\over{n\in \Z}}
\left(1-e^{2\pi i m z}e^{2\pi i n \tau}\right)^{c_{mn}},
$$
where $J(\tau)=:\sum_{n\geq-1}c_nq^n$.
This formula plays a key role in the overall proof of moonshine, and in particular in connecting $J$ with the natural infinite dimensional graded module of the monster which moonshine studies.
Equivalently, Asai, Kaneko, and Ninomiya (cf. Theorem 3 of \cite{AKN}) proved the following result for the logarithmic derivative (with respect to $\tau$) of the preceding generating function (see \eqref{OmegaDefn}):
\begin{equation}
\label{JnOmega}
 \sum_{n=0}^{\infty} J_n(\tau)e^{2\pi i nz}=\Omega(j(z); \tau)
 .
\end{equation}

\subsection{An algebraic formula for $p(n)$}\label{PofnAlgFormula}
Here, we recall a finite, algebraic formula for the partition numbers obtained in \cite{BruinierOno2}.
To state this, we first require the quasimodular Eisenstein series 
\[E_2(\tau):=1-24\sum_{n\geq1}\sigma_1(n)q^n\]
and the Dedekind eta function 
\[
\eta(\tau):=q^{\frac{1}{24}}\prod_{n\geq1}\left(1-q^n\right).
,
\]
Then we consider the weight $-2$,
level $6$ modular function
\[
G(\tau):=\frac{1}{2}\frac{E_2(\tau)-2E_2(2\tau)-3E_2(3\tau)+6E_2(6\tau)}{\eta(\tau)^2\eta(2\tau)^2\eta(3\tau)^2\eta(6\tau)^2}
.
\]
Our distinguished non-holomorphic modular function $\mathcal P$ is then obtained by applying a Maass raising operator to $G$:
\[
\mathcal{P}(\tau):=\frac{i}{2\pi}\frac{\partial G}{\partial\tau}-\frac{G(\tau)}{2\pi\operatorname{Im}(\tau)}.
\]
We then require the distinguished collection of  binary quadratic forms given by
\[
\mathcal{Q}_{D,6,1}:=
 \left\{Q=[a,b,c]: a,b,c\in\Z, b^2-4ac= -24D+1, 6|a, a>0,\ b\equiv 1 \pmod{12} \right\}.
\]
For each quadratic form $Q$ in this set, we define the corresponding 
CM point $\tau_Q$ to be the point in the upper half plane satisfying $a\tau_Q^2+b\tau_Q+c=0$. 
Finally, the trace of $\mathcal P$ at the relevant CM points is given by
\[
\operatorname{Tr}(\mathcal P;n):=\sum_{Q\in\mathcal{Q}_{n,6,1}}\mathcal{P}\left(\tau_Q\right)
.
\]
In terms of these notations, the main result of \cite{BruinierOno2} is the following representation for $p(n)$ in terms of these traces.
\begin{thm}\label{PartitionAlgFormula}
For any $n\geq1$, we have
\[
p(n)=\frac{1}{24n-1}\operatorname{Tr}(\mathcal P;n).
\]
Moreover, $(24n-1)\mathcal{P}\left(\tau_Q\right)$ is always an algebraic integer.
\end{thm}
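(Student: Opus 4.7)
The plan is to realize $p(n)$ as a Fourier coefficient of a weight $-1/2$ modular form and then match that coefficient against the CM trace of $\mathcal{P}$ via a theta lift. The starting observation is that
\[
\frac{1}{\eta(\tau)} = q^{-1/24}\sum_{n \geq 0} p(n)\, q^{n},
\]
so $p(n)$ is the $(n - 1/24)$-th Fourier coefficient of a weakly holomorphic form of weight $-1/2$ with the $\eta$-multiplier on $\SL_2(\Z)$. The task is then to build a companion identity on the ``weight $0$'' side whose CM values recover these coefficients.

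First I would show that $\mathcal{P}$ is, up to normalization, a non-holomorphic Maass--Poincar\'e series of weight $0$ on $\Gamma_0(6)$ attached to the principal part $q^{-1}$ at the cusp $i\infty$. The formula $\mathcal{P} = \frac{i}{2\pi}\partial_\tau G - \frac{G}{2\pi \operatorname{Im}(\tau)}$ is exactly the Maass raising operator applied to the weight $-2$ object $G$; unwinding the combination $E_2(\tau) - 2E_2(2\tau) - 3E_2(3\tau) + 6E_2(6\tau)$ together with the eta-product denominator $\eta(\tau)^2\eta(2\tau)^2\eta(3\tau)^2\eta(6\tau)^2$ shows that $G$ is modular of weight $-2$ on $\Gamma_0(6)$ with a single pole of shape $q^{-1}$ at $i\infty$ and is regular at the other cusps. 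This pins down $G$, and hence $\mathcal{P}$, inside an explicit Poincar\'e-series family whose CM values are indexed by discriminants of the form $-(24n-1)$.

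The core step is a theta-lift / trace-formula argument in the style of Zagier and Bruinier--Funke. The claim is that the generating function
\[
\sum_{n \geq 1} \frac{\operatorname{Tr}(\mathcal{P}; n)}{24n - 1}\, q^{n - 1/24}
\]
is a weakly holomorphic modular form of weight $-1/2$ on an appropriate congruence subgroup with the $\eta$-multiplier, whose principal part is precisely $q^{-1/24}$, inherited from the $q^{-1}$ pole of $\mathcal{P}$ at $i\infty$. Because $\eta(\tau)^{-1}$ is the unique such form with this principal part, the two series must coincide, yielding $p(n) = \operatorname{Tr}(\mathcal{P}; n)/(24n - 1)$. I expect the principal obstacle here: one must verify that the Kudla--Millson / Shintani-type theta kernel used to lift weight $0$ to weight $-1/2$ has exactly the right level and multiplier to match $\eta^{-1}$, and that the normalizing factor $24n - 1$ emerges precisely from the discriminant $b^2 - 4ac = -(24n - 1)$ of the forms in $\mathcal{Q}_{n, 6, 1}$. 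Getting the constants correct, rather than merely getting an identity of the shape ``coefficient = trace'', is the delicate heart of the argument.

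For the algebraic integrality of $(24n - 1)\mathcal{P}(\tau_Q)$, the plan is to invoke CM theory for modular functions on $\Gamma_0(6)$: each $\mathcal{P}(\tau_Q)$ lies in the ring class field attached to the order of discriminant $-(24n - 1)$, and the raising operator evaluated at the fixed point $\tau_Q$ of the CM torus reduces $\mathcal{P}(\tau_Q)$ to an explicit algebraic expression in $G(\tau_Q)$, $E_2(\tau_Q)$, and the eta products above. Integrality then follows from the classical integrality of singular moduli of $j$ combined with a denominator analysis showing that the factor $24n - 1$ exactly clears the local denominators that appear in the explicit formula for $\mathcal{P}(\tau_Q)$.
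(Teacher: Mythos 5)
A point of calibration first: the paper does not prove Theorem \ref{PartitionAlgFormula} at all. It is imported verbatim as the main result of \cite{BruinierOno2}, so there is no internal proof to compare you against; the honest comparison is with the argument of Bruinier and Ono in that reference.

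Measured against that, your outline correctly identifies the architecture of the actual proof: realize $p(n)$ as a coefficient of the weight $-1/2$ form $\eta(\tau)^{-1}$, identify $G$ as a weight $-2$ weakly holomorphic form on $\Gamma_0(6)$ with principal part $q^{-1}$ at $i\infty$, and use a Bruinier--Funke-style theta lift plus uniqueness of weakly holomorphic forms with prescribed principal part to match the trace generating function against $\eta^{-1}$. But as written this is a roadmap with two genuine gaps. (1) The theta-lift step, which you yourself flag as ``the delicate heart,'' \emph{is} the theorem: in \cite{BruinierOno2} the lift is a twisted, vector-valued construction whose input is the weight $-2$ form $G$ rather than the weight $0$ function $\mathcal{P}$; the trace of the raised function $\mathcal{P}$ and the normalizing factor $24n-1$ fall out of an explicit Fourier expansion of the kernel, not from discriminant bookkeeping alone. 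Your sketch asserts that the lift has the right level, multiplier, and constants but supplies no mechanism for verifying any of this, and also does not address why the relevant space of weight $-1/2$ forms with that multiplier has no holomorphic residual piece (the uniqueness you invoke). (2) The integrality claim does not follow from ``classical integrality of singular moduli of $j$'': $\mathcal{P}$ is non-holomorphic, so $\mathcal{P}(\tau_Q)$ is not the CM value of a rational function of $j$, and the denominator analysis is delicate. Indeed, Bruinier and Ono themselves only established that $6(24n-1)\mathcal{P}(\tau_Q)$ is an algebraic integer; removing the factor of $6$ --- i.e., the statement exactly as quoted here --- required separate work (due to Larson and Rolen). So your final paragraph, as stated, would not close the argument.
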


\subsection{Rademacher series}
\label{PoincareSection}

In this section, we recall the required expressions for our Rademacher series. These can be built out of well-known expressions for Poincar\'e series, for example, the reader is referred to Section~6.3 of \cite{BOOK}. However, these formulas here are very classical, and date back to seminal work of Rademacher, Zuckerman, and others. From these classical results, we can write the following Rademacher series representation for 
$J_d(\tau).
$
\begin{prop}
\label{RadSeries}
If $d$ is a positive integer, then
\begin{equation*}
J_d(\tau)=R_d(\tau) = q^{-d} + \sum_{n\geq1} r_{d,n}q^n,
\end{equation*}
where
\[
r_{d,n}
= 2 \pi \sqrt{\frac{d}{n}}
 \times \sum_{c > 0} \frac{K(n;c)}{c}
I_{1}\left(\frac{4\pi \sqrt{dn}}{c}\right),
\]
and where
\[
K(n;c):=
\sum_{ r \pmod c^\times}  \exp\left(2\pi i\left(\frac{-d\overline
r+nr}{c}\right)\right) 
\]
is a Kloosterman sum ($\bar r$ denotes the multiplicative inverse of $r$ modulo $c$) and $I_1$ is a modified $I$-Bessel function.
\end{prop}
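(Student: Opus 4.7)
The plan is to construct $R_d(\tau)$ as a regularized Rademacher--Poincar\'e series, verify that it is a weakly holomorphic modular function on $\SL_2(\Z)$ with principal part $q^{-d}+O(q)$, and then invoke uniqueness to conclude $R_d(\tau) = J_d(\tau)$.

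Following the classical approach of Rademacher and Zuckerman referenced in the text and detailed in Section~6.3 of \cite{BOOK}, I would start from the formal coset sum
\[
R_d(\tau) := \sum_{\gamma \in \Gamma_\infty \backslash \SL_2(\Z)} e^{-2\pi i d \,\gamma\tau},
\]
interpreted as a conditionally convergent Rademacher sum (truncated along Ford circles or Farey arcs). In weight $0$ the sum is only conditionally convergent, but the Rademacher regularization produces a well-defined holomorphic function on the upper half-plane whose $\SL_2(\Z)$-invariance is built into the coset structure. The identity coset contributes $e^{-2\pi i d\tau} = q^{-d}$, the sole source of negative Fourier coefficients.

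Next I would compute the positive Fourier coefficients by parametrizing cosets with $c>0$ through their bottom rows $(c,r)$ with $\gcd(c,r)=1$, applying the identity $\gamma\tau = a/c - 1/(c^2(\tau+r/c))$, and extracting the $n$-th coefficient by an integral over a horizontal line. The resulting inner integral is evaluated using the Hankel-type representation
\[
I_1(z) \;=\; \frac{z}{2\pi i}\int_{\delta - i\infty}^{\delta + i\infty} t^{-2}\, e^{t + z^2/(4t)}\, dt,
\]
which produces a factor of $I_1\!\left(4\pi\sqrt{dn}/c\right)$. Summing over the top-row residues $a \pmod c$ satisfying $ar\equiv -1 \pmod c$ assembles the Kloosterman sum $K(n;c)$ (since $a \equiv -\bar r \pmod c$), and summing over $c$ gives the claimed formula for $r_{d,n}$.

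With the Fourier expansion in hand, $R_d$ is exhibited as a weakly holomorphic modular function on $\SL_2(\Z)$ with principal part $q^{-d}+O(q)$. Since there are no nonzero holomorphic modular functions on $\SL_2(\Z)$ that vanish at $i\infty$, such a function is unique, and hence $R_d = J_d$. The main obstacle throughout is the conditional convergence in weight $0$: one must confirm that the Farey-arc regularization preserves modular invariance and holomorphy on the upper half-plane. This is the delicate classical point treated carefully by Rademacher and Zuckerman, and once it is taken for granted the remainder of the proof reduces to a standard Bessel-integral calculation.
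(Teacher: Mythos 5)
Your outline is the standard Rademacher--Zuckerman construction (regularized weight-zero Poincar\'e series, Fourier coefficients via the Schl\"afli--Hankel integral for $I_1$ and Kloosterman sums, then uniqueness from the principal part), which is exactly the classical argument the paper defers to by citing Section~6.3 of \cite{BOOK} and the work of Rademacher and Zuckerman, so the approaches coincide. Two harmless slips in your sketch: the determinant condition gives $ar\equiv 1\pmod c$ (so $a\equiv\bar r$, not $-\bar r$; the resulting Kloosterman sums agree after $r\mapsto -r$), and the contour representation of $I_1(z)$ should carry the prefactor $z/(4\pi i)$ rather than $z/(2\pi i)$.
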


\subsection{Proofs of Theorem \ref{Theorem1}, Corollary \ref{Theorem2} and Theorem~\ref{congruences}}
Here we prove the main results of this paper. 

\begin{proof}[Proof of Theorem \ref{Theorem1}]

We begin with part (i). By \eqref{Zkdefn} and \eqref{JdPP}, together with the fact that weakly holomorphic modular functions are determined by their principal parts, we have that 
\begin{equation*}
\begin{aligned}
Z_k(q)&=(1-q)\cdot\sum_{n\geq0}p(n)q^{n-k}+O\left(q\right)
\\
&=
\sum_{n=0}^k\left(p(n)-p(n-1)\right)q^{n-k}+O(q)
\\
&=
p(0)\left(q^{-k}-q^{1-k}\right)+p(1)\left(q^{1-k}-q^{2-k}\right)+\ldots+p(k-1)\left(q^{-1}-q^0\right)+p(k)+O(q)
\\
&
=
p(k)+
\left(J_{k}(\tau)
-J_{k-1}(\tau)\right)
+
\sum_{n=1}^{k-1}
p(n)\left(J_{k-n}(\tau)-J_{k-n-1}(\tau)\right)
.
\end{aligned}
\end{equation*}
The claim in part (ii) follows from part (i) and \eqref{JnOmega}.

\end{proof}

\begin{proof}[Proof of Corollary~\ref{Theorem2}]
Corollary~\ref{Theorem2} follows from Theorem~\ref{PartitionAlgFormula} and Proposition~\ref{RadSeries}.
\end{proof}

\begin{proof}[Proof of Theorem~\ref{congruences}]
Recall that the Atkin $U(p)$-operator is defined by
$$
\left(\sum_{n\gg -\infty} a(n)q^n\right) \ | \ U(p):=
\sum_{n\gg -\infty} a(pn)q^n.
$$
Suppose that $p\leq 11$ is prime.
If $F(X)$ is a monic polynomial with integer coefficients 
and we let
$$
F(j(\tau))=\sum a(n)q^n.
$$
If $p\leq 11$ is prime and $\deg(F(X))<p$, then Theorem 2.3 (2) of \cite{ElkiesOnoYang}   implies that
$$
F(j(\tau)) \ | \ U(p)\equiv a(0)\pmod p.
$$
Theorem~\ref{Theorem1} then implies the result for $p=7$ and $11$ and $1\leq k\leq p-1$.
For the cases where $(p,k)\in \{(7,8), (11,12)\}$, one applies Theorem 2.3 (1) of \cite{ElkiesOnoYang}. 

For the remaining cases where $p\leq 5$ and the modulus of the congruence is power of $p$, one may consider the weight $12kp$ holomorphic modular forms
$
Z_k(q)\cdot \Delta(p \tau)^{kp} 
$
on $\Gamma_0(p)$, where $\Delta(\tau)$ is the usual weight 12 normalized cusp form on $\SL_2(\Z)$,
It suffices to show that
$$
(Z_k(q) \cdot \Delta(p \tau)^{kp}) \ | \ U(p)\equiv 0\begin{cases}
\pmod{2^{11}} \ \ \ \ &{\text {\rm if}}\ p=2,\\
\pmod{3^5} \ \ \ \ &{\text {\rm if}}\ p=3,\\
\pmod{5^2}\ \ \ \ &{\text {\rm if}} \ p=5.\\
\end{cases}
$$
 These congruences are easily confirmed using the well-known theorem
of Sturm (for example, see p. 40 of \cite{CBMS}) which reduces each claim to a finite computation.
In particular, one only needs to check the claimed congruences for the first $kp(p+1)$ terms.
\end{proof}

\end{document}